\newcommand{\meg}[1]{\operatorname{meg}(#1)}
\newcommand{\xmeg}[1]{\overline{\operatorname{meg}}(#1)}
\newcommand{\NP}{\textsf{NP}}
\newcommand{\boxprod}{\mathbin{\square}}
\newcommand{\dist}{\operatorname{dist}}
\newcommand{\diam}{\operatorname{diam}}
\newcommand{\ie}{i.e.\ }
\newtheorem{theorem}{Theorem}
\newtheorem{lemma}[theorem]{Lemma}
\newtheorem{cor}[theorem]{Corollary}
\newtheorem{prop}[theorem]{Proposition}
\theoremstyle{remark}
\newtheorem*{remark}{Remark}
\title{Monitoring edge-geodetic sets: hardness and graph products}
\author{John Haslegrave\thanks{Mathematical Institute, University of Oxford. E-mail: \texttt{j.haslegrave@cantab.net}.}}
\begin{document}
\maketitle

\begin{abstract}
Foucaud, Krishna and Ramasubramony Sulochana recently introduced the concept of monitoring edge-geodetic sets in graphs, and a related graph invariant. These are sets of vertices such that the removal of any edge changes the distance between some pair of vertices in the set. They studied the minimum possible size of such a set in a given graph, which we call the monitoring edge-geodetic number.

We show that the decision problem for the monitoring edge-geodetic number is \NP-complete. We also give best-possible upper and lower bounds for the Cartesian and strong products of two graphs. These bounds establish the exact value in many cases, including many new examples of graphs whose only monitoring edge-geodetic set is the whole vertex set.

\medskip\noindent\textbf{Keywords:} edge monitoring; geodetic problem; shortest path; Cartesian product; strong product; computational complexity.
\end{abstract}

\section{Introduction}
Network monitoring covers a range of applications where a network (which we think of here as a finite simple graph) is susceptible to sporadic failures, either by the loss of nodes or of connections, which it is necessary to observe or respond to. Often the relevant effects of these failures are to temporarily increase distances or even to separate some nodes from the remainder of the network. Of course, if we could continuously observe the entire graph then we would immediately become aware of any failures, but it is desirable to find a way to achieve the same goal using less information.

Very recently, Foucaud, Krishna and Ramasubramony Sulochana \cite{FKL22+} introduced a very natural parameter for network monitoring by distances between a subset of vertices. We say that a set of vertices \textit{monitors} an edge of the graph if removing that edge would change the distance between some pair of vertices in the set. A \textit{monitoring edge-geodetic set}, or \textit{MEG-set} for short, is a set that monitors all edges of the graph. The problem is to find a MEG-set that is as small as possible. The minimum size of a MEG-set of $G$ is the \textit{monitoring edge-geodetic number}, written $\meg{G}$. The number of vertices of degree $1$, $L(G)$, is clearly a lower bound on $\meg{G}$, since the edge incident to a vertex of degree $1$ can only be monitored by a set if that vertex is in the set. Foucaud, Krishna and Ramasubramony Sulochana show that this bound is attained by any forest, and in general prove an upper bound on $\meg{G}-L(G)$ in terms of the cycle rank of $G$. They also give the exact value of $\meg{G}$ in several other cases.

This setting shares features with several others which appear in the literature, and may indeed be regarded as a combination of them. It is closely related to \textit{distance edge-monitoring} \cite{FKMR,FKKMR}, where instead we ask for the weaker property that the removal of any edge increases the distance between some pair of vertices, at least one of which is in the set. In a different direction, it is related to the concept of \textit{edge-geodetic sets} \cite{SJ07}, which are sets of vertices such that any edge in the graph lies on a shortest path between some pair of vertices in the set. Since the property of being a MEG-set may be rephrased as requiring any edge in the graph to lie on every shortest path between some pair of vertices in the set, every MEG-set is both an edge-geodetic set and a distance edge-monitoring set (although the converse is not true). Edge-geodetic sets arose as a natural variation of \textit{geodetic sets} \cite{HLT93}, where we require every vertex to lie on a shortest path between some pair of vertices in the set. Further variations include \textit{strong edge-geodetic sets} \cite{MKXAT}, where we can choose a specific shortest path for each pair of vertices in the set such that the union of these paths is the entire edge set. These, and similar problems, arise naturally in applications such as choosing a patrolling scheme for road inspectors \cite{MKXAT} or using path-oriented tools (such as ping) to monitor IP networks \cite{BR03}.

In Section \ref{sec:prod} we give a general approach to Cartesian products of graphs, giving upper and lower bounds for the monitoring edge-geodetic number of the Cartesian product of $G$ and $H$ in terms of $\meg{G}$ and $\meg{H}$. In cases where the two bounds differ, we show that either bound can give the correct value. Our bounds also apply to strong products; we also show that the monitoring edge-geodetic number of the strong product is at least that of the Cartesian product, and that this inequality can be strict. As a corollary we deduce exact values for general graphs arising as products of paths, which include two results of \cite{FKL22+} as special cases. Particular attention is paid in \cite{FKL22+} to graphs satisfying $\meg{G}=|G|$, and we give many new examples by showing that any (Cartesian or strong) product of such a graph with an arbitrary second graph also has this property. 

In Section \ref{sec:np} we address the other question raised in \cite{FKL22+}, that of computational complexity. We show that determining the minimum size of a MEG-set is \NP-hard, that is, the decision problem to determine, for a given graph $G$ and integer $k$, whether $\meg{G}\leq k$ is \NP-complete.

We use standard graph-theoretic notation, and in particular write $\dist_G(u,v)$ for the graph distance in $G$ between vertices $u$ and $v$, and $\diam(G)$ for the diameter of $G$, that is the maximum value of $\dist_G(u,v)$ over all pairs of vertices.

\section{Bounds for graph products}\label{sec:prod}
We consider two standard ways to form a product of graphs $G$ and $H$: the Cartesian product $G\boxprod H$ and the strong product $G\boxtimes H$. Both have vertex set $V(G)\times V(H)$. The edges of $G\boxprod H$ consist of pairs $(a,b)(c,d)$ such that either $a=c$ and $bd\in E(H)$ or $b=d$ and $ac\in E(G)$. The edges of $G\boxtimes H$ consist of the edges of $G\boxprod H$ together with the pairs $(a,b)(c,d)$ with $ac\in E(G)$ and $bd\in E(H)$. In each case the symbol used represents the product of a single edge with itself. The Cartesian product of two paths is a grid, whereas the strong product of two paths is sometimes called a ``king's graph'', since edges correspond to possible moves of a chess king. Using cycles instead of paths gives toroidal grids and toroidal king's graphs respectively.

The main result of this section is the following sequence of bounds.
\begin{theorem}\label{thm:bounds}
For any two graphs $G$ and $H$, we have
\begin{align*}\max\{\meg{G}|H|,\meg{H}|G|\}&\leq\meg{G\boxprod H}\\
&\leq\meg{G\boxtimes H}\\
&\leq\meg{G}|H|+\meg{H}|G|-\meg{G}\meg{H}.\end{align*}
Furthermore, if either $G$ or $H$ has a unique minimal MEG-set, then we have
\[\meg{G\boxprod H}=\meg{G\boxtimes H}=\meg{G}|H|+\meg{H}|G|-\meg{G}\meg{H},\]
and if both $G$ and $H$ have unique minimal MEG-sets, then so do $G\boxprod H$ and $G\boxtimes H$.
\end{theorem}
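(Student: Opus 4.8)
The plan is to reduce the whole theorem to a single structural description of MEG-sets of a product in terms of their rows and columns. For $S\subseteq V(G)\times V(H)$ write $S_h=\{u:(u,h)\in S\}$ and $S^g=\{v:(g,v)\in S\}$. The key lemma I would prove first is that $S$ is a MEG-set of $G\boxprod H$ if and only if every row $S_h$ is a MEG-set of $G$ and every column $S^g$ is a MEG-set of $H$. The crucial point is that, because distances add in the Cartesian product, a horizontal edge $(a,h)(c,h)$ lies on every shortest path between $(u_1,v_1)$ and $(u_2,v_2)$ precisely when $v_1=v_2=h$ and $ac$ lies on every shortest $G$-path from $u_1$ to $u_2$: any nontrivial movement in the $H$-coordinate could be scheduled so that the step $a\to c$ is taken at a level other than $h$. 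This description gives the lower bound at once, since $|S|=\sum_{h}|S_h|\ge|H|\meg{G}$ and symmetrically $|S|\ge|G|\meg{H}$, hence $|S|\ge\max\{\meg{G}|H|,\meg{H}|G|\}$.

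For the upper bound I would take the cross set $C=(A\times V(H))\cup(V(G)\times B)$, where $A,B$ are minimum MEG-sets of $G,H$; by inclusion--exclusion $|C|=\meg{G}|H|+\meg{H}|G|-\meg{G}\meg{H}$. Every row of $C$ equals $V(G)$ or $A$ and every column equals $V(H)$ or $B$, so the lemma shows $C$ is a MEG-set of $G\boxprod H$. The harder task is to show the same set $C$ monitors every edge of $G\boxtimes H$, which is what the upper bound for the strong product asserts. Here distances are maxima rather than sums and a shortest path may trade a coordinate step for a diagonal step, so the delicate ``lies on every shortest path'' condition now has to be checked against diagonal detours as well; I expect this to be the main obstacle. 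The intended argument is, for each edge type (horizontal, vertical, or diagonal), to choose a witnessing pair from $C$ whose separations in the two factors are arranged so that no diagonal shortcut can bypass the edge. The middle inequality $\meg{G\boxprod H}\le\meg{G\boxtimes H}$, which I may also take from the discussion in the introduction, I would otherwise recover in the same framework by arguing that a MEG-set of $G\boxtimes H$ already forces every row to be a MEG-set of $G$ and every column a MEG-set of $H$, so is a MEG-set of $G\boxprod H$.

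The equality statement then follows by sandwiching once the lower bound is sharpened under the uniqueness hypothesis. If $G$ has a unique minimal MEG-set $M$, then $M$ is contained in every MEG-set of $G$. Applying the row description to an arbitrary MEG-set $S$ of $G\boxprod H$, each $S_h\supseteq M$, so $M\times V(H)\subseteq S$; that is, the columns indexed by $M$ are full, while every remaining column $S^g$ is a MEG-set of $H$ and so has at least $\meg{H}$ elements. Summing over columns gives $|S|\ge\meg{G}|H|+(|G|-\meg{G})\meg{H}$, which equals the upper bound. Hence $\meg{G\boxprod H}$ equals the upper bound, and the chain $\meg{G\boxprod H}\le\meg{G\boxtimes H}\le\meg{G}|H|+\meg{H}|G|-\meg{G}\meg{H}$ forces equality throughout, including for the strong product.

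For the final, uniqueness, claim, suppose both $G$ and $H$ have unique minimal MEG-sets $M_G$ and $M_H$. The counting just described, applied to rows and symmetrically to columns, shows that every MEG-set of $G\boxprod H$ contains $C_0:=(M_G\times V(H))\cup(V(G)\times M_H)$; the same conclusion holds for $G\boxtimes H$ once the strong-product versions of the row and column descriptions are in hand. Since $C_0$ is itself a MEG-set of each product and is contained in every MEG-set, it is automatically the unique minimal MEG-set of each (any minimal MEG-set both contains $C_0$ and is contained in it). The one part needing care is that the structural descriptions used for the strong product genuinely match those for the Cartesian product, which is exactly where the diagonal edges must be controlled; this is the same obstacle already flagged for the upper bound, and I would expect to resolve both with a single analysis of shortest paths and diagonal steps in $G\boxtimes H$.
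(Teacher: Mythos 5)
Your overall architecture is the paper's: the row/column characterization of MEG-sets of $G\boxprod H$, the cross set as the candidate for the upper bound, the sharpened row count under a unique minimal MEG-set, and the containment argument for uniqueness of the product's minimal MEG-set all appear in the paper in essentially the form you describe, and those (Cartesian) parts of your write-up are complete.

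The genuine gap is the one you flag yourself and then leave unresolved: everything involving the strong product. You give no proof that the cross set monitors the edges of $G\boxtimes H$, and no proof of the shortest-path facts needed to show that the rows and columns of a MEG-set of $G\boxtimes H$ are MEG-sets of the factors (which is what the middle inequality rests on). Saying you would ``choose a witnessing pair whose separations in the two factors are arranged so that no diagonal shortcut can bypass the edge'' names the goal, not the construction. The paper's actual device for a diagonal edge $(a,b)(c,d)$ is: take $a',c'\in S$ witnessing $ac$ and $b',d'\in T$ witnessing $bd$, then truncate one side of each witness to obtain $(a'',b''),(c'',d'')\in S\vee T$ with $\dist_G(a'',a)=\dist_H(b'',b)$ and $\dist_G(c,c'')=\dist_H(d,d'')$; because the two coordinate distances are then equal, every shortest path between these two vertices must advance both coordinates at every step along shortest paths in the factors, and hence passes through $(a,b)(c,d)$. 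Some balancing idea of this kind is indispensable and is absent from your sketch. For the middle inequality you likewise need the explicit constructions (one all-diagonal shortest path when the coordinate distances agree; two shortest paths sharing no horizontal or vertical edge when they differ) showing that a pair differing in both coordinates monitors no horizontal or vertical edge.

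Moreover, your worry about diagonal detours is, if anything, understated. Even for a horizontal edge $e=(a,b)(c,b)$ of $G\boxtimes H$, lifting a witnessing pair $x,y\in S$ for $ac$ into row $b$ does not automatically work, because a shortest path from $(x,b)$ to $(y,b)$ in the strong product need not stay in row $b$: in $P_4\boxtimes P_3$ with $P_4=1\,2\,3\,4$ and $P_3=1\,2\,3$, the path $(1,2)(2,1)(3,2)(4,2)$ is a shortest path from $(1,2)$ to $(4,2)$ that avoids the horizontal edge $(2,2)(3,2)$, even though $23$ lies on every shortest path from $1$ to $4$ in $P_4$. So the horizontal and vertical edges of the strong product require their own argument, and this is a point where you should also scrutinize the published proof rather than take the corresponding step on trust.
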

\begin{remark}Given the form of these bounds, it would perhaps be more natural to think about the number of vertices \textit{not} in a minimum MEG-set. Writing $\xmeg{G}$ for the maximum order of the complement of a MEG-set, we can equivalently express the bounds above as
\[\min\{\xmeg{G}|H|,\xmeg{H}|G|\}\geq\xmeg{G\boxprod H}\geq\xmeg{G\boxtimes H}\geq\xmeg{G}\xmeg{H}.\]
\end{remark}
\begin{proof}To show the first inequality, we classify the MEG-sets of $G\boxprod H$. Given a set $S\subseteq V(G)\times V(H)$, and vertices $x\in V(G)$ and $y\in V(H)$, we write $S_{x,*}$ for $\{v\in V(H):(x,v)\in S\}$ and $S_{*,y}$ for $\{v\in V(G):(v,y)\in S\}$.

Suppose $(a,b)$ and $(c,d)$ are vertices with $a\neq c$ and $b\neq d$. Then there are two edge-disjoint (in fact, internally vertex-disjoint) shortest paths from $(a,b)$ to $(c,d)$ in $G\boxprod H$: one via $(c,b)$ where all intermediate vertices are of the form $(x,b)$ or $(c,y)$, and one via $(a,d)$ where all intermediate vertices are of the form $(x,d)$ or $(a,y)$. Conversely, all shortest paths from $(a,b)$ to $(c,b)$ are of the form $(a_0,b)\cdots(a_r,b)$, where $a=a_0\cdots a_r=c$ is a shortest path in $G$.

Consequently, an edge of the form $(x,y)(z,y)$ lies on all shortest paths between vertices $(a,b)$ and $(c,d)$ if and only if $b=d=y$, and $xz$ lies on all shortest paths between $a$ and $c$ in $G$. It follows that $S$ monitors all edges of the form $(x,y)(z,y)$ if and only if $S_{*,y}$ is a MEG-set of $G$. Similar arguments apply to edges of the form $(x,y)(x,z)$, and so $S$ is a MEG-set of $G\boxprod H$ if and only if $S_{x,*}$ is a MEG-set of $H$ for every $x\in V(G)$ and $S_{*,y}$ is a MEG-set of $G$ for every $y\in V(H)$.

Thus for any MEG-set $S$ we have $|S|=\sum_{x\in V(G)}|S_{x,*}|\geq |G|\meg{H}$, and likewise we have $|S|=\sum_{y\in V(H)}|S_{*,y}|\geq |H|\meg{G}$, as required.

\medskip
Next we show that $\meg{G\boxprod H}\leq\meg{G\boxtimes H}$. Let $S$ be any MEG-set of $G\boxtimes H$; it suffices to show that $S_{x,*}$ must be a MEG-set of $H$ for every $x\in V(G)$ and $S_{*,y}$ must be a MEG-set of $G$ for every $y\in V(H)$, since we showed earlier that this implies $S$ is a MEG-set of $G\boxprod H$. Suppose $(a,b)$ and $(c,d)$ are vertices with $a\neq c$ and $b\neq d$, and choose shortest paths $a=a_0\cdots a_r=c$ in $G$ and $b=b_0\cdots b_s=d$ in $H$. If $r=s$ then $(a_0,b_0)\cdots(a_r,b_r)$ is a shortest path from $(a,b)$ to $(c,d)$ in $G\boxtimes H$, and uses no edges of the form $(x,y)(x,z)$ or $(x,y)(z,y)$. If $r>s$ then both $(a_0,b_0)\cdots(a_s,b_s)\cdots(a_r,b_s)$ and $(a_0,b_0)\cdots(a_{r-s},b_0)\cdots(a_r,b_s)$ are shortest paths from $(a,b)$ to $(c,d)$ in $G\boxtimes H$, and have no edges of the form $(x,y)(x,z)$ or $(x,y)(z,y)$ in common, and similarly if $r<s$ then we can construct two shortest paths with this property. Thus in $G\boxtimes H$, an edge of the form $(x,y)(z,y)$ lies on all shortest paths between vertices $(a,b)$ and $(c,d)$ if and only if $b=d=y$, and $xz$ lies on all shortest paths between $a$ and $c$ in $G$, and a similar categorisation applies to edges of the form $(x,y)(x,z)$. As before, it follows that  $S_{x,*}$ must be a MEG-set of $H$ for every $x\in V(G)$ and $S_{*,y}$ must be a MEG-set of $G$ for every $y\in V(H)$.

\medskip
We now prove the upper bound. Choose a MEG-set $S$ for $G$ of order $\meg{G}$ and a MEG-set $T$ for $H$ of order $\meg{H}$, and consider the set  $S\vee T=\{(a,b)\in V(G)\times V(H):a\in S\text{ or }b\in T\}$. We have $|S\vee T|=|S||H|+|G||T|-|S||T|=\meg{G}|H|+\meg{H}|G|-\meg{G}\meg{H}$, so it suffices to prove that $S\vee T$ is a MEG-set for $G\boxtimes H$.

Let $e$ be an edge of $G\boxtimes H$. If $e=(a,b)(c,b)$ then $ac\in E(G)$ and so there are vertices $x,y\in S$ such that $ac$ lies on all shortest paths from $x$ to $y$ in $G$. Consequently $e$ lies on all shortest paths from $(x,b)$ to $(y,b)$ in $G\boxtimes H$, and so $S\vee T$ monitors $e$. A similar argument applies in the case $e=(a,b)(a,d)$. The only remaining case is if $e=(a,b)(c,d)$ with $ac\in E(G)$ and $bd\in E(H)$. Now there are vertices $a',c'\in S$ and $b',d'\in T$ such that $ac$ lies on all shortest paths from $a'$ to $c'$ in $G$ and $bd$ lies on all shortest paths from $b'$ to $d'$ in $G$. Note that the shortest paths from $a'$ to $c'$ in $G$ are precisely the paths obtained by concatenating a shortest path from $a'$ to $a$, the edge $ac$, and a shortest path from $c$ to $c'$ (where a path may have length $0$, if $a'=a$ or $c'=c$), and likewise for $b'$-$d'$ paths in $H$. If $\dist_G(a',a)\leq\dist_H(b',b)$, set $a''=a'$ and choose a vertex $b''$ on a shortest path from $b'$ to $b$ in $H$ such that $\dist_H(b'',b)=\dist_G(a'',a)$; otherwise set $b''=b'$ and choose a vertex $a''$ on a shortest path from $a'$ to $a$ in $G$ such that $\dist_G(a'',a)=\dist_H(b'',b)$. Note that, since either $a''=a'\in S$ or $b''=b'\in T$, we have $(a'',b'')\in S\vee T$. Similarly we may choose $c''$ and $d''$ such that $c''$ lies on a shortest path from $c$ to $c'$ in $G$ and $d''$ lies on a shortest path from $d$ to $d'$ in $H$, satisfying $\dist_G(c,c'')=\dist_H(d,d'')$ and $(c'',d'')\in S\vee T$. Now every shortest path from $a''$ to $c''$ in $G$ must include $ac$ (since otherwise we could extend it to a path from $a'$ to $c'$ avoiding $ac$ of length $\dist_G(a',c')$), and every shortest path from $b''$ to $d''$ in $H$ must include $bd$; furthermore, by choice of endpoints, any pair of such paths must have the same length, and include $ac$ and $bd$ in the same position. Since $\dist_G(a'',c'')=\dist_H(b'',d'')$, the shortest paths from $(a'',c'')$ to $(b'',d'')$ in $G\boxtimes H$ are paths of the form $(a_0,b_0)\cdots(a_r,b_r)$, where $a''=a_0\cdots a_r=c''$ and $b''=b_0\cdots b_r=d''$ are shortest paths, thus all contain $(a,b)(c,d)$. Thus $S\vee T$ monitors $e$, as required.

\medskip
Finally, we show that $\meg{G\boxprod H}$ equals the upper bound when $G$ has a unique minimal MEG-set, and that $G\boxprod H$ has a unique minimal MEG-set whenever both $G$ and $H$ do. Since we already showed that any MEG-set of $G\boxtimes H$ is a MEG-set of $G\boxprod H$, the corresponding facts for $G\boxtimes H$ will follow.

Suppose that $G$ has unique minimal MEG-set $S$, and let $R$ be a MEG-set of $G\boxprod H$. Then any MEG-set of $G$ contains $S$, and, since $R_{*,y}$ is a MEG-set for every $y$, we have $(x,y)\in R$ for every $x\in S$ and $y\in V(H)$, \ie $R_{x,*}=V(H)$ for every $x\in S$. Also, since $R_{x,*}$ is a MEG-set of $H$ for every $x\not\in S$, we have 
\[|R|=\sum_{x\in V(G)}|R_{x,*}|\geq |S||H|+(|G|-|S|)\meg{H}=\meg{G}|H|+|G|\meg{H}-\meg{G}\meg{H},\]
as required. If additionally $H$ has a unique minimal MEG-set $T$ then $T\subseteq R_{x,*}$ for each $x\in V(G)$, and so $S\vee T\subseteq R$. But if $R$ is a minimal MEG-set then $|R|\leq|S\vee T|$, so we must have $R=S\vee T$.
\end{proof}

We immediately obtain the following fact, giving a wide variety of new graphs $F$ with $\xmeg{F}=0$, \ie $\meg{F}=|F|$.
\begin{cor}
Let $G$ be any graph with $\xmeg{G}=0$ and $H$ be any graph. Then $\xmeg{G\boxprod H}=\xmeg{G\boxtimes H}=0$.
\end{cor}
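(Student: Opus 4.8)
The plan is to obtain this as an immediate consequence of Theorem~\ref{thm:bounds}, most cleanly through the reformulation recorded in the preceding remark. The first observation is that $\xmeg{G}=0$ is equivalent to $\meg{G}=|G|$: since $V(G)$ is always a MEG-set of $G$ (deleting any edge $uv$ changes $\dist(u,v)$ from $1$ to at least $2$), we have $\xmeg{F}\geq 0$ for every graph $F$, and $\xmeg{G}=0$ then forces the only MEG-set of $G$ to be the whole vertex set.

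Given this, I would simply read off the conclusion from the chain
\[\min\{\xmeg{G}|H|,\xmeg{H}|G|\}\geq\xmeg{G\boxprod H}\geq\xmeg{G\boxtimes H}\geq\xmeg{G}\xmeg{H}\]
in the remark. Substituting $\xmeg{G}=0$, the leftmost quantity is $0$, so $\xmeg{G\boxprod H}\leq 0$; combined with $\xmeg{G\boxprod H}\geq 0$ this gives $\xmeg{G\boxprod H}=0$. The next inequality then yields $0=\xmeg{G\boxprod H}\geq\xmeg{G\boxtimes H}\geq 0$, so $\xmeg{G\boxtimes H}=0$ as well.

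Equivalently, one could invoke the ``furthermore'' clause of Theorem~\ref{thm:bounds} directly: since $\meg{G}=|G|$, the set $V(G)$ is the unique minimal MEG-set of $G$, so $\meg{G\boxprod H}=\meg{G\boxtimes H}=\meg{G}|H|+\meg{H}|G|-\meg{G}\meg{H}$, which collapses to $|G||H|=|G\boxprod H|$ on substituting $\meg{G}=|G|$.

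There is essentially no obstacle here; the entire content is carried by Theorem~\ref{thm:bounds}. The only point requiring care is the nonnegativity of $\xmeg{\cdot}$, which is what upgrades the upper bound of $0$ to an equality, and this is immediate because $V(F)$ is a MEG-set of every graph $F$, so that the empty set is always an admissible complement.
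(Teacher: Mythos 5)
Your proof is correct and matches the paper's intent: the paper states this corollary as an immediate consequence of Theorem~\ref{thm:bounds} (giving no separate proof), and your derivation via the complemented inequality chain in the remark, together with the observation that $\xmeg{F}\geq 0$ for every graph $F$, is exactly the intended reading. The alternative route through the ``furthermore'' clause is also valid, since $\xmeg{G}=0$ makes $V(G)$ the unique (hence unique minimal) MEG-set of $G$.
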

In particular, any graph which is a Cartesian or strong product of any graph with $K_r$, where $r\geq 2$, has this property. Using \cite[Theorem 11]{FKL22+}, we may also take $G$ to be any complete multipartite graph other than a star.

We also obtain, using the last part of Theorem \ref{thm:bounds} iteratively, the exact monitoring edge-geodetic number for any product of any number of paths; this includes as special cases \cite[Theorem 12, Theorem 13]{FKL22+} (corresponding to $m_1=\cdots=m_k=2$ and $k=2$ respectively).
\begin{cor}\label{cor:grids}Let $k\geq 1$ be an integer and $m_1,\ldots,m_k\geq 2$. Let the graph $G$ be obtained as the Cartesian or strong product of the paths $P_{m_1},\ldots, P_{m_k}$ (or as a product of these paths using an arbitrary combination of Cartesian and strong products). Then 
\[\meg{G}=\prod_{i=1}^km_i-\prod_{i=1}^k(m_i-2);\] furthermore $G$ has a unique minimal MEG-set.
\end{cor}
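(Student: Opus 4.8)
The plan is to induct on the number $k$ of path factors, exploiting the multiplicative form that the last part of Theorem~\ref{thm:bounds} takes when expressed via $\xmeg{\cdot}$. When $G$ and $H$ both have unique minimal MEG-sets, the equality $\meg{G\boxprod H}=\meg{G\boxtimes H}=\meg{G}|H|+\meg{H}|G|-\meg{G}\meg{H}$ rearranges (exactly as in the Remark) to $\xmeg{G\boxprod H}=\xmeg{G\boxtimes H}=\xmeg{G}\xmeg{H}$, and moreover $G\boxprod H$ and $G\boxtimes H$ then also have unique minimal MEG-sets. Since the resulting value is the same for both product operations, the argument will not need to distinguish Cartesian from strong products, which is precisely what makes the ``arbitrary combination'' case tractable.

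For the base case $k=1$ I would take $G=P_{m_1}$. Its two endpoints are vertices of degree $1$ and hence lie in every MEG-set; conversely they form a MEG-set, since the unique shortest path between them is the whole path and so passes through every edge. Thus the pair of endpoints is the unique minimal MEG-set, giving $\meg{P_{m_1}}=2$ and $\xmeg{P_{m_1}}=m_1-2$, in agreement with the claimed formula.

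For the inductive step I would regard the product expression as a binary tree whose leaves are the paths and whose internal nodes are labelled $\boxprod$ or $\boxtimes$, and write $G\cong L\odot R$, where $\odot\in\{\boxprod,\boxtimes\}$ is the operation at the root and $L$, $R$ are the products of the paths in the two subtrees, indexed by sets $A$ and $B$ partitioning $\{1,\dots,k\}$ with $1\le|A|,|B|<k$. By the inductive hypothesis $L$ and $R$ each have a unique minimal MEG-set and satisfy $\xmeg{L}=\prod_{i\in A}(m_i-2)$ and $\xmeg{R}=\prod_{i\in B}(m_i-2)$. Applying the last part of Theorem~\ref{thm:bounds} (in the $\xmeg{\cdot}$ form above) then shows that $G$ has a unique minimal MEG-set and $\xmeg{G}=\xmeg{L}\,\xmeg{R}=\prod_{i=1}^k(m_i-2)$. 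Since $|G|=\prod_{i=1}^k m_i$, we conclude $\meg{G}=|G|-\xmeg{G}=\prod_{i=1}^k m_i-\prod_{i=1}^k(m_i-2)$, as required.

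The proof faces no serious obstacle once Theorem~\ref{thm:bounds} is available: the multiplicativity of $\xmeg{\cdot}$ collapses what would otherwise be an awkward recurrence into a single product. The only points requiring care are the base case (establishing that a path has a \emph{genuinely unique} minimal MEG-set, which is what allows the uniqueness hypothesis to propagate up the tree) and the bookkeeping for the mixed combination of products. For the latter, the key observation is that structural induction on the expression tree removes any need to commute $\boxprod$ past $\boxtimes$: at each internal node we split into its two subtrees and apply the theorem, which treats both product types identically.
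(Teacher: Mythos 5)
Your proposal is correct and follows essentially the same route as the paper: induction on $k$ with the path as base case (endpoints of degree $1$ forcing membership, and forming a MEG-set since the unique shortest path between them covers every edge), then splitting the product at the top-level operation and invoking the last part of Theorem~\ref{thm:bounds} in its $\xmeg{\cdot}$ form to propagate both the product formula and uniqueness. Your explicit treatment of the expression as a binary tree is a slightly more careful bookkeeping of the ``arbitrary combination'' case than the paper's phrasing, but the argument is the same.
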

\begin{proof}
We use induction on $k$. For $k=1$, we have a path of length at least $1$, and a set is a MEG-set if and only if it includes both endpoints. For $k>1$, we may write $G=G_1\boxprod G_2$ or $G=G_1\boxtimes G_2$, where $G_1$ is a product of $P_{m_1},\ldots,P_{m_{k'}}$ and $G_2$ is a product of $P_{m_{k'}+1},\ldots,P_{m_k}$, and $1<k'<k$. By the induction hypothesis $\xmeg{G_1}=\prod_{i=1}^{k'}(m_i-2)$ and $\xmeg{G_2}=\prod_{i=k'+1}^k(m_i-2)$, and both have unique minimal MEG-sets. Thus Theorem \ref{thm:bounds} gives the required result. 
\end{proof}

We next consider the question of whether our bounds can be improved when they differ. Of course, there are many cases already referred to for which lower and upper bounds coincide, in which case neither bound can be improved. However, it is natural to ask whether when the bounds are different either can be correct. 

Corollary \ref{cor:grids} shows that the upper bound is attained by an arbitrary product of paths. Provided we have $m_i\geq 2$ for each $i$, the upper and lower bounds do not coincide in this case. Our next result shows that the discrete torus gives an example where the two bounds differ but the lower bound is attained.

\begin{theorem}For $m\geq 5$ the discrete torus $C_m\boxprod C_m$ has $\meg{C_m\boxprod C_m}=3m$.
\end{theorem}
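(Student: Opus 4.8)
The plan is to meet the lower bound of Theorem~\ref{thm:bounds} by an explicit construction. Both directions rest on understanding the MEG-sets of a single cycle, so I would first record that for $m\geq 5$ we have $\meg{C_m}=3$, together with a description of the MEG-sets of size $3$. Identify $V(C_m)$ with $\mathbb{Z}_m$; for a set of chosen vertices, call the number of edges between two cyclically consecutive chosen vertices a \emph{gap}. I claim that a three-element set is a MEG-set of $C_m$ if and only if all three of its gaps are strictly less than $m/2$. Indeed, a single edge is monitored by a pair exactly when it lies on the unique shortest path of that pair, which forces the pair's separating arc to be strictly the shorter one; so the edges inside a gap of length $a$ are monitored (all by the pair bounding that gap) when $a<m/2$, and when $a\geq m/2$ no pair has a shortest path through them, since the shortest chosen-vertex arc containing such an edge is the gap itself and that arc is not shortest. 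Taking three roughly equally spaced vertices makes every gap close to $m/3<m/2$, giving $\meg{C_m}\leq 3$; and since two vertices monitor only the (at most $\lceil m/2\rceil-1$) edges of a single shortest path, we cannot cover all $m$ edges with two, so $\meg{C_m}\geq 3$.

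The lower bound is now immediate. By the classification of MEG-sets of a Cartesian product established in the proof of Theorem~\ref{thm:bounds}, for any MEG-set $S$ of $C_m\boxprod C_m$ each column $S_{*,y}$ is a MEG-set of $C_m$, whence $|S|=\sum_{y}|S_{*,y}|\geq m\meg{C_m}=3m$.

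For the matching upper bound it suffices, by the same classification, to produce $S\subseteq\mathbb{Z}_m\times\mathbb{Z}_m$ of size $3m$ for which every row $S_{x,*}$ and every column $S_{*,y}$ is a MEG-set of $C_m$. I would take three ``diagonals'', $S=\{(x,x),(x,x+s),(x,x+t):x\in\mathbb{Z}_m\}$ for offsets $0<s<t<m$. Row $x$ is then $\{x,x+s,x+t\}$ with gap multiset $\{s,\,t-s,\,m-t\}$, and column $y$ is $\{y,y-s,y-t\}$, which has the same gap multiset; moreover the diagonals are disjoint, so $|S|=3m$ exactly. Hence $S$ is a MEG-set as soon as $s$, $t-s$ and $m-t$ are all below $m/2$.

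It remains to check that such offsets exist whenever $m\geq 5$, which is the one fiddly step and a short case analysis on $m\bmod 3$: for $m=3k$ take $(s,t)=(k,2k)$, for $m=3k+2$ take $(k+1,2k+1)$, and for $m=3k+1$ take $(k,2k+1)$; the resulting largest gap is below $m/2$ for every $m\geq 5$, the sole failure among $m\geq 3$ being $m=4$ (consistent with $\meg{C_4}=4$ and with the formula $3m$ breaking down there). Together with the lower bound this gives $\meg{C_m\boxprod C_m}=3m$. The only genuinely delicate part is the gap characterisation of the first paragraph; granting it, the bound and its tightness both follow mechanically.
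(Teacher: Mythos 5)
Your proof is correct and follows essentially the same route as the paper: both constructions take three parallel diagonals so that every row $S_{x,*}$ and column $S_{*,y}$ is a rotation of a fixed $3$-element MEG-set of $C_m$, invoke the row/column classification from the proof of Theorem~\ref{thm:bounds} to conclude the diagonal set is a MEG-set, and obtain the matching lower bound from $m\cdot\meg{C_m}=3m$. The only difference is that you prove the gap characterisation of $3$-element MEG-sets of $C_m$ directly rather than citing $\meg{C_m}=3$ from \cite{FKL22+}, and your insistence that every gap be \emph{strictly} below $m/2$ is in fact more careful than the paper's parenthetical example $\{0,\lfloor m/2\rfloor,m-1\}$, whose largest gap equals $m/2$ when $m$ is even and which is therefore not actually a MEG-set of $C_m$ in that case.
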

\begin{proof}Set $V(C_m)=\{0,\ldots, m-1\}$. Choose a set $S\subset\{0,\ldots,m-1\}$ of order $3$ that is a MEG-set for $C_m$ (e.g.\ by taking $S=\{0,\lfloor (m-1)/2\rfloor,2\lfloor (m-1)/2\rfloor\}$). Let $T$ be the set of pairs $(i,j)$ with $0\leq i,j\leq m-1$ such that $i+j\mod m\in S$. Clearly $|T|=3m$. See Figure \ref{fig:torus} for an example. Then for each $i$ we have $T_{*,i}$ and $T_{i,*}$ are obtained from $S$ by a rotation, so are also MEG-sets of $C_m$. From the proof of Theorem \ref{thm:bounds}, this means $T$ is a MEG-set for $C_m\boxprod C_m$. Since $\meg{C_m}=3$ by \cite[Theorem 8]{FKL22+}, this matches the lower bound in Theorem \ref{thm:bounds}. 
\end{proof}
\begin{figure}
\centering
\begin{tikzpicture}[thick]
\foreach \x in {1,...,5}{
\draw (\x,0.5) -- (\x,5.5);
\draw (0.5,\x) -- (5.5,\x);}

\draw[thin,-{[length=2mm] To[]}] (0.5,0.5) -- (0.5,3);
\draw[thin] (0.5,3) -- (0.5,5.5);
\draw[thin,-{[length=2mm] To[] To[]}] (0.5,5.5) -- (3,5.5);
\draw[thin] (3,5.5) -- (5.5,5.5);
\draw[thin,-{[length=2mm] To[]}] (5.5,0.5) -- (5.5,3);
\draw[thin] (5.5,3) -- (5.5,5.5);
\draw[thin,-{[length=2mm] To[] To[]}] (0.5,0.5) -- (3,0.5);
\draw[thin] (3,0.5) -- (5.5,0.5);

\filldraw (1,1) circle (0.1);
\filldraw[fill=white] (1,2) circle (0.1);
\filldraw (1,3) circle (0.1);
\filldraw[fill=white] (1,4) circle (0.1);
\filldraw (1,5) circle (0.1);

\filldraw[fill=white] (2,1) circle (0.1);
\filldraw (2,2) circle (0.1);
\filldraw[fill=white] (2,3) circle (0.1);
\filldraw (2,4) circle (0.1);
\filldraw (2,5) circle (0.1);

\filldraw (3,1) circle (0.1);
\filldraw[fill=white] (3,2) circle (0.1);
\filldraw (3,3) circle (0.1);
\filldraw (3,4) circle (0.1);
\filldraw[fill=white] (3,5) circle (0.1);

\filldraw[fill=white] (4,1) circle (0.1);
\filldraw (4,2) circle (0.1);
\filldraw (4,3) circle (0.1);
\filldraw[fill=white] (4,4) circle (0.1);
\filldraw (4,5) circle (0.1);

\filldraw (5,1) circle (0.1);
\filldraw (5,2) circle (0.1);
\filldraw[fill=white] (5,3) circle (0.1);
\filldraw (5,4) circle (0.1);
\filldraw[fill=white] (5,5) circle (0.1);
\end{tikzpicture}
\caption{The discrete torus $C_5\boxprod C_5$ with a MEG-set of order $15$ (filled vertices).}
\label{fig:torus}
\end{figure}
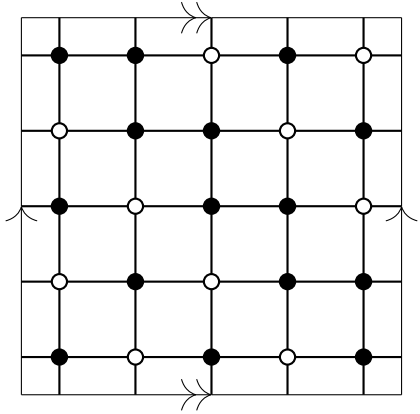

We showed above that $\meg{G\boxprod H}\leq \meg{G\boxtimes H}$. However, we have seen many examples where the two are equal. We next give an example where the inequality is strict. We will use the following result relating $\meg{G}$ to the vertex cover number when $G$ has diameter $2$, which may be of independent interest. The \textit{vertex cover number} of $G$, written $\tau(G)$, is the order of the smallest set of vertices which contains at least one endpoint of every edge. The associated decision problem was one of Karp's original \NP-complete problems \cite{Kar72}.

\begin{lemma}\label{lem:diam}For any graph $G$ with $\diam(G)=2$, any MEG-set is also a vertex cover. In particular, $\meg{G}\geq \tau(G)$.
\end{lemma}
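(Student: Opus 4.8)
The plan is to argue by contradiction. Suppose $S$ is a MEG-set of $G$ that is \emph{not} a vertex cover; then some edge $uv$ has neither endpoint in $S$, i.e.\ $u,v\notin S$. Since $S$ monitors $uv$, the reformulation of the MEG-property given in the introduction supplies a pair $x,y\in S$ such that $uv$ lies on every shortest $x$-$y$ path in $G$; in particular $uv$ lies on at least one shortest $x$-$y$ path $P$, which is all I will use.

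The decisive point is that the hypothesis $\diam(G)=2$ forces $P$ to be very short. First I would note $\dist_G(x,y)\le 2$, so $P$ has at most two edges, and then split into cases on $\dist_G(x,y)$. If $\dist_G(x,y)=1$ then $P$ is the single edge $xy$, and $uv$ being on $P$ forces $\{u,v\}=\{x,y\}\subseteq S$, contradicting $u,v\notin S$. If $\dist_G(x,y)=2$ then $P=x\,w\,y$ for some vertex $w$, and the edge $uv$ must be either $xw$ or $wy$; in both cases one endpoint of $uv$ lies in $\{x,y\}\subseteq S$, again contradicting $u,v\notin S$. (The case $\dist_G(x,y)=0$ is vacuous, since then $P$ has no edges at all.) Hence no such edge $uv$ can exist, and $S$ is a vertex cover.

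The ``in particular'' statement is then immediate: a minimum MEG-set has size $\meg{G}$ and, being a vertex cover, has size at least the minimum vertex cover number, so $\meg{G}\ge\tau(G)$.

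There is no serious obstacle here: the entire argument rests on the elementary observation that on a path of length at most two, every edge is incident to one of the two endpoints of the path. The only points needing a moment's care are the separate (trivial) treatment of the distance-one case, and the remark that we require only ``$uv$ lies on \emph{some} shortest path'' rather than the full monitoring condition---so the diameter-$2$ hypothesis is doing all the work while the MEG-property is used only in a weakened form.
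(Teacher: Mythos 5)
Your proof is correct and is essentially the paper's own argument: both rest on the observation that a shortest path between two set vertices has length at most $2$, so any edge on it is incident to one of its endpoints, which lies in $S$. The only cosmetic differences are that you phrase it as a contradiction and spell out the cases $\dist_G(x,y)\in\{1,2\}$ explicitly.
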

\begin{proof}Let $S$ be a MEG-set, and let $e$ be any edge of $G$. Since $S$ monitors $e$, there is a shortest path between vertices of $S$ that contains $e$. Since all shortest paths have length at most $2$, this path consists of $e$ together with at most one other edge, so at least one endvertex of the path is also an endvertex of $e$. Thus at least one endvertex of $e$ is in $S$; since $e$ was arbitrary, $S$ is a vertex cover.
\end{proof}

We can now find $\meg{C_5\boxtimes C_5}$, which exceeds $\meg{C_5\boxprod C_5}=15$.
\begin{prop}We have $\meg{C_5\boxtimes C_5}=20$.
\end{prop}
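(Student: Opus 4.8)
The plan is to sandwich $\meg{C_5\boxtimes C_5}$ between a lower bound coming from the vertex cover number and an upper bound coming from an explicit MEG-set of order $20$; the target value $20$ is exactly $25-\alpha(C_5\boxtimes C_5)$, where $\alpha$ denotes the independence number. Note first that the general upper bound of Theorem \ref{thm:bounds} only yields $\meg{C_5}\cdot 5+\meg{C_5}\cdot 5-\meg{C_5}^2=21$, so the point of this proposition is precisely to improve that estimate by one and to match it from below.

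For the lower bound I would use that $\diam(C_5\boxtimes C_5)=2$: since $\dist((a,b),(c,d))=\max(\dist(a,c),\dist(b,d))$ and $\diam(C_5)=2$, distances in the strong product never exceed $2$. By Lemma \ref{lem:diam} every MEG-set is then a vertex cover, so $\meg{C_5\boxtimes C_5}\geq\tau(C_5\boxtimes C_5)=25-\alpha(C_5\boxtimes C_5)$, and it suffices to show $\alpha(C_5\boxtimes C_5)\leq 5$. I would prove this directly rather than quote it: writing $a_i$ for the number of chosen vertices with first coordinate $i$, the second coordinates occurring in two consecutive rows $i,i+1$ are distinct (a repeated value gives an edge of the strong product) and pairwise non-adjacent in $C_5$ (an adjacency again gives an edge), hence form an independent set of $C_5$, so $a_i+a_{i+1}\leq\alpha(C_5)=2$; summing cyclically gives $2\sum_i a_i\leq 10$, \ie every independent set has order at most $5$.

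For the upper bound I would take the complement of the diagonal independent set $I=\{(i,2i\bmod 5):0\le i\le 4\}$, namely $S=V(C_5\boxtimes C_5)\setminus I$, of order $20$, and verify it is a MEG-set. Since $I$ is independent every edge has at most one endpoint in $I$, and edges with both endpoints in $S$ are monitored automatically, the edge being the unique shortest path between its adjacent endpoints. For the axis-parallel edges I would invoke the characterisation in the proof of Theorem \ref{thm:bounds}: these are all monitored exactly when each $S_{x,*}$ and each $S_{*,y}$ is a MEG-set of $C_5$, and here each of these is a copy of $C_5$ with a single vertex deleted, which is readily checked to be a MEG-set of $C_5$.

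The heart of the argument, and the step I expect to be the main obstacle, is the diagonal edges having one endpoint in $I$. Here the automorphism $(i,j)\mapsto(i+1,j+2)$ fixes $S$ setwise and acts transitively on $I$, so it suffices to monitor the four diagonal edges at $(0,0)$. I would handle these using the two pairs $\{(1,1),(4,4)\}$ and $\{(1,4),(4,1)\}$, all four of whose members lie in $S$: a short check shows $(0,0)$ is the \emph{unique} common neighbour of each pair, so each pair is joined by a single shortest path (of length $2$) through $(0,0)$, and these two paths together contain all four diagonal edges at $(0,0)$. Since a unique shortest path is automatically contained in every shortest path, $S$ monitors these edges, completing the verification that $S$ is a MEG-set and hence that $\meg{C_5\boxtimes C_5}=20$.
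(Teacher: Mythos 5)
Your lower bound is correct and is a pleasant alternative to the paper's argument (the paper counts a vertex cover row by row, whereas you bound the independence number and use $\tau=25-\alpha$; both give $20$), and your analysis of the diagonal edges at a deleted vertex is also correct and matches the paper's ``opposing pairs'' idea. The gap is in your treatment of the axis-parallel edges incident to a vertex of $I$. The characterisation you invoke from the proof of Theorem~\ref{thm:bounds} --- that these edges are all monitored exactly when every $S_{x,*}$ and $S_{*,y}$ is a MEG-set of $C_5$ --- is an equivalence for the \emph{Cartesian} product, but for the strong product only the ``only if'' direction holds: a shortest path in $C_5\boxtimes C_5$ between two vertices of a common row need not stay in that row, because the other coordinate can take a detour along diagonal edges. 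Concretely, consider $e=(0,0)(0,1)$ with $(0,0)\in I$. Since the diameter is $2$, the only pairs for which $e$ could lie on \emph{every} shortest path are $\{(0,0),(0,1)\}$ itself and pairs $\{u,(0,1)\}$ with $u$ adjacent to $(0,0)$, at distance $2$ from $(0,1)$, and having $(0,0)$ as \emph{unique} common neighbour with $(0,1)$. The candidates are $u\in\{(4,4),(0,4),(1,4)\}$, and each fails: for instance $(0,4)$ and $(0,1)$ have the three common neighbours $(4,0)$, $(0,0)$, $(1,0)$, so $(0,4)(4,0)(0,1)$ is a shortest path avoiding $e$. Hence no pair of vertices of $S$ monitors $e$, and your set $S=V\setminus I$ is not a MEG-set.

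You should be aware that this is not a local slip that a different choice of pairs would fix. The same computation shows that \emph{no} axis-parallel edge of $C_5\boxtimes C_5$ lies on all shortest paths between any pair other than its own two endpoints, so every MEG-set must contain both endpoints of every such edge, \ie all $25$ vertices. The paper's own proof founders on exactly the same point: it asserts that the eight neighbours of a deleted vertex split into four opposing pairs each joined by a unique shortest path through that vertex, but the axis-parallel pair $(0,1),(0,4)$ has three common neighbours, not one. So your write-up reproduces, rather than repairs, the weak step of the published argument; what survives of both proofs is the lower bound $\meg{C_5\boxtimes C_5}\geq 20$ and the monitoring of the diagonal edges, while the claimed value $20$ itself appears not to be attainable.
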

\begin{proof}We first show that $\tau(C_5\boxtimes C_5)\geq 20$, which gives the required lower bound by Lemma \ref{lem:diam} since $\diam(G\boxtimes G)=\diam(G)$. Denote the vertices of $C_5\boxtimes C_5$ by $\{(i,j):0\leq i,j\leq 4\}$. Suppose that $S$ is a vertex cover. We claim that $|S_{i,*}\cup S_{i+1,*}|\geq 8$ for each $i\in\{0,\ldots,4\}$, where we take addition modulo $5$. Since $S_{i,*}$ covers the edge $(i,j)(i,j+1)$ for each $j\in\{0,\ldots,4\}$, we must have $|S_{i,*}|\geq 3$, and similarly $|S_{i+1,*}|\geq 3$. If $|S_{i,*}|=3$ then the missing vertices must be non-adjacent, \ie of the form $(i,j)$ and $(i,j+2)$ for some $j\in\{0,\ldots,4\}$. But then in order to cover all edges between these two vertices and $(i+1,0),\ldots,(i+1,4)$, we must have $|S_{i+1,*}|=5$. Similarly, if $|S_{i+1,*}|=3$ then $|S_{i,*}|=5$, and if neither of these apply then $|S_{i,*}|,|S_{i+1,*}|\geq 4$, so the claim holds. Thus we have \[|S|=\sum_{i=0}^4|S_{i,*}|=\frac12\sum_{i=0}^4|S_{i,*}\cup S_{i+1,*}|\geq 20,\]
as required.

Finally, we claim that $V(G)\setminus\{(0,0),(1,2),(2,4),(3,1),(4,3)\}$ is a MEG-set. It clearly monitors all edges not adjacent to one of the missing vertices. Also, the neighbours of a missing vertex $v$ may be divided into four opposing pairs, such that each pair has a unique shortest path, which uses $v$, and therefore the edges between that pair and $v$ are monitored.
\end{proof}

Finally in this section, we observe that the property of having a unique minimal MEG-set in Theorem \ref{thm:bounds} cannot be relaxed to that of having a unique minimum MEG-set. For example, the graph $G$ obtained from the $5$-cycle $abcde$ by adding two pendant edges $aa'$ and $bb'$ has unique minimum MEG-set $S=\{a',b', d\}$. However, the set $T=\{a',b',c,e\}$ is also a minimal MEG-set. In this case the set $(S\vee S)\setminus\{(d,d)\}$ is a MEG-set for $G\boxprod G$, showing that the upper bound in Theorem \ref{thm:bounds} is not attained in this case.
\section{Computational complexity}\label{sec:np}
In this section we prove \NP-hardness of determining the monitoring edge-geodetic number.
\begin{theorem}The decision problem of determining for a graph $G$ and natural number $k$ whether $\meg{G}\leq k$ is \NP-complete.
\end{theorem}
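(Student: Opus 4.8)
The plan is to prove both membership in \NP\ and \NP-hardness; the former is routine and the latter requires a reduction from a classical problem. For membership, I would observe that a MEG-set itself is a polynomial-size certificate. Given a candidate set $S$ with $|S|\le k$, one computes all pairwise distances in $G$ by breadth-first search from every vertex, and then, for each edge $e$, recomputes the distances in $G-e$. Since $S$ monitors $e$ precisely when $\dist_{G-e}(x,y)>\dist_G(x,y)$ for some $x,y\in S$, checking whether $S$ is a MEG-set of size at most $k$ takes polynomial time, so the problem lies in \NP.

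For hardness I would reduce from \textsc{Vertex Cover}, the \NP-complete problem highlighted by the vertex-cover discussion above. Given an instance $(H,t)$ I would build a graph $G$ in which every MEG-set splits into a fixed \emph{forced} part, common to all MEG-sets, together with a variable part constrained to encode a vertex cover of $H$. The forced part is produced by attaching small pendant gadgets, exploiting the fact used repeatedly above that a vertex of degree $1$ must lie in every MEG-set; these contribute a constant number $c$ of mandatory vertices and simultaneously monitor all of the gadget edges. For each edge $uv\in E(H)$ I would install a \emph{selection gadget}: a critical edge $f_{uv}$ together with auxiliary vertices arranged so that, once the forced vertices are present, $f_{uv}$ is monitored if and only if at least one of $u,v$ lies in the chosen set. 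The intended equivalence is then that $S$ is a MEG-set exactly when $S$ contains all forced vertices and $S\cap V(H)$ is a vertex cover of $H$, so that $\meg{G}=\tau(H)+c$ and $\meg{G}\le t+c$ holds if and only if $H$ has a vertex cover of size at most $t$.

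The two directions of correctness would then proceed as follows. If $H$ has a vertex cover $C$ with $|C|\le t$, I would verify directly that $C$ together with the forced vertices monitors every edge of $G$: gadget edges are handled by the pendant structure, and each critical edge $f_{uv}$ is monitored because $C$ meets $\{u,v\}$. Conversely, from any MEG-set $S$ of size at most $t+c$ I would first argue that $S$ must contain all $c$ forced vertices, and then that for each edge $uv$ the requirement that $f_{uv}$ be monitored forces $u\in S$ or $v\in S$, so that $S\cap V(H)$ is a vertex cover of $H$ of size at most $t$.

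The main obstacle is the design and analysis of the selection gadget, since naive attempts fail in both directions. Attaching pendants carelessly creates long unique geodesics that monitor the critical edge automatically, imposing no covering constraint, whereas making the critical edge a bridge lets a forced pendant monitor it by itself. The gadget must therefore place $f_{uv}$ on a genuinely contested family of geodesics, controlling all relevant shortest-path lengths so that (i) when $u$ or $v$ is selected there is a forced pair whose unique geodesic runs through $f_{uv}$, and (ii) when neither is selected, every geodesic through $f_{uv}$ has an equally short competitor avoiding it and no pair of forced vertices accidentally monitors $f_{uv}$. Carrying out this case analysis, and confirming that the forced vertices are exactly forced and contribute precisely the claimed constant $c$, is where the real work lies.
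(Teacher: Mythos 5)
Your \NP-membership argument is fine and matches the paper's. The hardness half, however, is a plan rather than a proof: everything that makes the reduction work is deferred. You correctly identify that the whole difficulty lies in designing a selection gadget in which the critical edge $f_{uv}$ is monitored if and only if $u$ or $v$ is selected, with no forced pair or cross-gadget pair monitoring it accidentally --- and then you stop exactly there. The paper's proof (a reduction from SAT rather than Vertex Cover, though the two are structurally similar in spirit) consists almost entirely of that missing work: a variable gadget $F_i$ (a $5$-cycle with two pendant paths, arranged so every MEG-set must contain the two degree-$1$ vertices and exactly one of $s_i^+,s_i^-$ if it is to have the target size), a clause gadget $H_j$ (a triangle with two pendant edges), connecting paths whose lengths $1$, $2$, $3$ are tuned so that the unique shortest path from the selected $s_i^{\pm}$ to $x_j$ uses $u_jv_j$ precisely when the literal is satisfied, and two hub vertices $z_1,z_2$ joined to all $v_j$ and all $w_j$ whose sole purpose is to shorten every cross-gadget distance enough that no unintended pair monitors a critical edge. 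Without an explicit gadget and the exhaustive shortest-path case analysis verifying (i) which vertices are forced, (ii) that a good selection monitors every edge, and (iii) that a bad selection leaves $f_{uv}$ unmonitored by \emph{every} pair in the set, there is no reduction.

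Two further concrete concerns with the proposed route. First, your forced part cannot have \emph{constant} size $c$: if each edge gadget (or each vertex of $H$) carries its own pendants, the number of forced vertices grows with the instance, so $c$ must be an explicit function of $|V(H)|$ and $|E(H)|$ and the threshold $t+c$ must be computed accordingly (in the paper it is $k=3m+2n$). Second, a Vertex Cover reduction is not obviously easier than the SAT reduction: Lemma~\ref{lem:diam} gives only the one-sided inequality $\meg{G}\geq\tau(G)$ for diameter-$2$ graphs, and $\meg{G}$ generally exceeds $\tau(G)$ because a MEG-set must also certify every edge via a pair of its vertices all of whose geodesics use that edge; so one cannot hope to make $\meg{G}=\tau(H)+c$ hold without the same kind of delicate distance bookkeeping the paper performs. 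The approach is salvageable in principle, but as written the proof has a gap precisely where the theorem's content lies.
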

\begin{proof}
This problem is clearly in \NP, since a MEG-set of order $k$ witnesses $\meg{G}\leq k$, and it is straightforward to verify in polynomial time that a given set is a MEG-set by computing all pairwise distances for $G$ and for every graph obtained by removing a single edge.

To show hardness, we give a Karp reduction from SAT, that is, given a SAT instance we construct (in polynomial time) a graph $G$ and integer $k$ such that the instance is satisfiable if and only if $\meg{G}\leq k$. 

Suppose that the instance of SAT consists of $n$ clauses $C_1,\ldots,C_n$ using $m$ variables $a_1,\ldots,a_m$. We may assume that both $a_i$ and $\neg a_i$ appear somewhere for each $i$, since if only one of them appears then we may choose the truth value of $a_i$ appropriately and remove all clauses containing it without affecting satisfiability. We also assume that no clause contains both $a_i$ and $\neg a_i$ for any $i$ (since otherwise we may remove that clause without affecting satisfiability), and that $n\geq 2$ (since otherwise the instance is trivially satisfiable). We set $k=3m+2n$, and construct a graph $G$ as follows.

For each variable $a_i$, we have a subgraph $F_i$ on nine vertices consisting of a cycle $r_i^+s_i^+s_i^-r_i^-t_i$ with two pendant paths $p_i^+q_i^+r_i^+$ and $p_i^-q_i^-r_i^-$.

For each clause $C_j$ we have a subgraph $H_j$ on five vertices consisting of a triangle $u_jv_jw_j$ and two pendant edges $v_jx_j$ and $w_jy_j$. All subgraphs $F_i$ and $H_j$ are disjoint.

For each pair $(i,j)$ such that $a_i$ or $\neg a_i$ appears in $C_j$, we connect $F_i$ and $H_j$ as follows. If $a_i$ appears, add an edge between $r_i^+$ and $u_j$, a path of length $2$ between $q_i^+$ and $w_j$, and a path of length $3$ between $r_i^-$ and $v_j$. If $\neg a_i$ appears, do the same with all signs reversed. All paths are internally vertex-disjoint both from each other and from the subgraphs previously constructed. Write $G_{i,j}$ for the subgraph consisting of $F_i$, $H_j$ and these connections between them. An example is shown in Figure \ref{fig:SAT}.

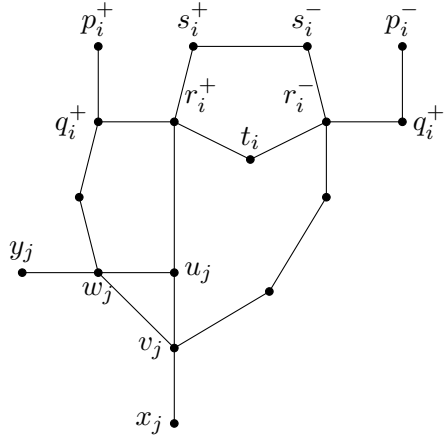
\begin{figure}
\centering
\begin{tikzpicture}
\filldraw (1,5) circle (0.05) node[anchor=south] {$p_i^+$};
\filldraw (2.25,5) circle (0.05) node[anchor=south] {$s_i^+$};
\filldraw (3.75,5) circle (0.05) node[anchor=south] {$s_i^-$};
\filldraw (5,5) circle (0.05) node[anchor=south] {$p_i^-$};
\filldraw (1,4) circle (0.05) node[anchor=east] {$q_i^+$};
\filldraw (2,4) circle (0.05) node[anchor=south west] {$r_i^+$};
\filldraw (4,4) circle (0.05) node[anchor=south east] {$r_i^-$};
\filldraw (5,4) circle (0.05) node[anchor=west] {$q_i^+$};
\filldraw (3,3.5) circle (0.05) node[anchor=south] {$t_i$};
\filldraw (0.75,3) circle (0.05);
\filldraw (4,3) circle (0.05);
\draw (1,5) -- (1,4) -- (2,4) -- (2.25,5) -- (3.75,5) -- (4,4) -- (5,4) -- (5,5);
\filldraw (0,2) circle (0.05) node[anchor=south] {$y_j$};
\filldraw (1,2) circle (0.05) node[anchor=north] {$w_j$};
\filldraw (2,2) circle (0.05) node[anchor=west] {$u_j$};
\filldraw (2,1) circle (0.05) node[anchor=east] {$v_j$};
\filldraw (2,0) circle (0.05) node[anchor=east] {$x_j$};
\filldraw (3.25,1.75) circle (0.05);
\draw (2,0) -- (2,1) -- (1,2) -- (0,2);
\draw (1,4) -- (0.75,3) -- (1,2) -- (2,2) -- (2,1) -- (3.25,1.75) -- (4,3) -- (4,4) -- (3,3.5) -- (2,4) -- (2,2);
\end{tikzpicture}
\caption{The subgraphs $F_i$ and $H_j$ and connections between them, where $a_i$ appears in $C_j$.}
\label{fig:SAT}
\end{figure}

Finally, add two new vertices $z_1,z_2$ and add all edges of the form $v_jz_1$ and $w_jz_2$.

Note that any MEG-set for this graph must contain $p^+_i$ and $p^-_i$ together with at least one of $s^+_i$ and $s_i^-$ for every $i$. The first two are necessary because they have degree $1$, and since every shortest path containing $s_i^+s_i^-$ starts or ends at one of those two vertices, this edge cannot be monitored if neither $s_i^+$ nor $s^-_i$ is in the set. Likewise, any MEG-set must contain $x_j$ and $y_j$ for each $j$.

We therefore consider sets consisting of $p_i^+$, $p_i^-$ and exactly one of $s_i^+$ or $s_i^-$ for each $i\leq m$, and $x_j$ and $y_j$ for each $j\leq n$. If $G$ has a MEG-set of order at most $k$, then it must be of this form. Sets of this form correspond to truth assignments, where $a_i$ is true if and only if $s_i^+$ is in the set. We will show that any set of this form monitors all edges except possibly edges of the form $u_jv_j$, and that $u_jv_j$ is monitored if and only if $C_j$ is satisfied in the corresponding truth assignment. Thus we will have $\meg{G}\leq k$ if and only if the instance is satisfiable. In the analysis that follows, we write $s_i^\pm$ to mean whichever of $s_i^+$ and $s_i^-$ is in the set, and we use e.g.\ $p_i^\pm$ and $q_i^\mp$ for vertices with the same or opposite sign respectively.

For each $i$, the pair $p_i^+,p_i^-$ monitors the edges of the path $p_i^+q_i^+r_i^+t_ir_i^-q_i^-p_i^-$, the pair $p_i^\pm,s_i^\pm$ monitors $r_i^\pm s_i^\pm$, and the pair $p_i^\mp s_i^\pm$ monitors $s_i^+s_i^-$ and $r_i^\mp s_i^\mp$.

For each $j$, the pair $x_j,y_j$ monitors the edges of the path $x_jv_jw_jy_j$.

Next we consider each pair $(i,j)$ such that $a_i$ or $\neg a_i$ appears in $C_j$. Suppose, without loss of generality, that $a_i$ appears in $C_j$ (as shown in Figure \ref{fig:SAT}). The pair $p_i^-,x_j$ is connected within $G_{i,j}$ by a path of length $6$. Any route which crosses into $F_{i'}$ or $H_{j'}$ for some $i'\neq i$ or $j'\neq j$ has length at least $7$. Thus this pair monitors the edges on the path of length $3$ between $r_i^-$ and $v_j$. Similarly, the pair $p_i^+, y_i$ monitors the path of length $2$ between $q_i^+$ and $w_j$. The pair $s_i^\pm,y_j$ monitors the edges $r_i^+u_j$ and $u_jw_j$, since there is a unique shortest path $s_i^+r_i^+u_jw_jy_j$ or $s_i^-s_i^+r_i^+u_jw_jy_j$ respectively. If $s_i^\pm=s_i^+$ then there is a unique shortest path of length $4$ between $s_i^\pm$ and $x_j$, which uses the edge $u_jv_j$, so this pair monitors that edge. However, if $s_i^\pm=s_i^-$ then there are two shortest paths of length $5$: $s_i^-s_i^+r_i^+u_jv_jx_j$ and another path via the path of length $3$ from $r_i^-$ to $v_j$, with the latter not using $u_jv_j$.

It follows that all edges other than those of the form $u_jv_j$, $x_jz_1$ or $y_jz_2$ are monitored by pairs thus far discussed, whereas $u_jv_j$ is monitored by $s_i^\pm,x_j$ if and only if the truth assignment for $a_i$ satisfies $C_j$. It remains to show that the other pairs, that is, pairs from $F_i$ and $F_{i'}$ for $i\neq i'$, or from $H_j$ and $H_{j'}$ for $j\neq j'$, or from $F_i$ and $H_j$ where neither $a_i$ nor $\neg a_i$ appears in $C_j$, monitor all edges of the form $x_jz_1$ or $y_jz_2$, but do not monitor $u_jv_j$ unless the clause $C_j$ is satisfied.

The pair $x_j,x_{j'}$ monitors the edges $x_jz_1$ and $x_{j'}z_1$, and likewise $y_j,y_{j'}$ monitors the edges $y_jz_2$ and $y_{j'}z_2$. There are two shortest paths between $x_j$ and $y_{j'}$, one via $v_{j'}$ and one via $w_{j}$, so this pair only monitors the edges $x_jv_j$ and $w_{j'}y_{j'}$.

We next consider shortest paths between pairs $(b,c)$ with $b\in\{p_i^+,s_i^\pm,p_i^-\}$ and $c\in\{p_{i'}^+,s_{i'}^\pm,p_{i'}^-\}$. Since each of $a_i$ and $\neg a_i$ appears somewhere, there is some $j$ with $\dist(b,w_j)=3$, and likewise there is some $j'$ with $\dist(c,w_{j'})=3$. This gives a path of length $8$ via $z_2$ (unless $j=j'$ in which case there is a path of length $6$). In order for a shorter path to exist and visit $v_{j''}$ for some $j''$, we must have $\dist(b,v_{j''}),\dist(c,v_{j''})\leq 4$. However, in that case we have $\dist(b,w_{j''})\leq\dist(b,v_{j''})$, and likewise for $c$, giving a shortest path avoiding $v_{j''}$. Consequently $b,c$ does not monitor any edge of the form $u_jv_j$.

Finally, we consider pairs $(b,c)$ with $b\in\{p_i^+,s_i^\pm,p_i^-\}$ and $c\in\{x_j,y_j\}$, where $a_i$ and $\neg a_i$ do not appear in $C_j$. Again, there is some $j'$ with $\dist(b,w_{j'})=3$, and so there is a path via $z_2$ of length $6$ if $c=y_j$ and $7$ if $c=x_j$ that does not use $u_jv_j$ or $u_{j'}v_{j'}$. The only way a shorter path can exist is if $c=x_j$ and $b=s_i^\pm$, in which case we may choose $j'$ such that $\dist(b,v_{j'})=3$ and obtain a path of length $6$ via $z_1$. While this path does use the edge $u_{j'}v_{j'}$, that edge is already monitored by $s_i^\pm,x_{j'}$, and so pairs of this form do not monitor any edges not monitored by other pairs already considered.

Thus $u_jv_j$ is monitored by some pair if and only if the truth assignment satisfies $C_j$, as required.
\end{proof}

\section*{Acknowledgements}
Research supported by the European Research Council under the European Union's Horizon 2020 research and innovation programme (grant agreement no.\ 883810).

\end{document}